\theoremstyle{plain}
\newtheorem{theorem}{Theorem}
\newtheorem{problem}{Problem}
\newtheorem{lemma}{Lemma}
\newtheorem{conjecture}{Conjecture}
\theoremstyle{remark}
\newtheorem*{acknowledgment}{Acknowledgment}
\newcommand{\thmlabel}[1]{\label{thm:#1}}   
\newcommand{\lemlabel}[1]{\label{lem:#1}}   
\newcommand{\eqnlabel}[1]{\label{eqn:#1}}   
\newcommand{\thmref}[1]{\ref{thm:#1}}   
\newcommand{\lemref}[1]{\ref{lem:#1}}   
\newcommand{\eqnref}[1]{\eqref{eqn:#1}} 
\newcommand{\inv}{^{-1}}
\newcommand{\Aut}{\mathrm{Aut}}
\newcommand{\Fix}{\mathrm{Fix}}
\begin{document}

\title[Idempotent-Fixing Automorphisms]{Inverse Semigroups with Idempotent-Fixing Automorphisms}

\author{Jo\~{a}o Ara\'{u}jo}
\address[Ara\'{u}jo]{Universidade Aberta\\
1269--001 Lisboa, Portugal \\
and \\
Centro de \'{A}lgebra \\
Universidade de Lisboa \\
1649-003 Lisboa, Portugal}
\email{jaraujo@ptmat.fc.ul.pt}

\author{Michael Kinyon}
\address{Department of Mathematics \\
University of Denver \\ 2360 S Gaylord St \\ Denver, Colorado 80208 USA}
\email{mkinyon@du.edu}

\date{Received on MONTH, YEAR}
\issueinfo{VOL}{NUM}{MONTH}{YEAR}
\doiinfo{10.1007/DOI-NUMBER}
\begin{abstract}
A celebrated result of J. Thompson says that if a finite group $G$ has a fixed-point-free automorphism
of prime order, then $G$ is nilpotent. The main purpose of this note is to extend this result to finite
inverse semigroups. An earlier related result of B. H. Neumann says that a uniquely $2$-divisible group
with  a fixed-point-free automorphism of order $2$ is abelian. We similarly extend this result to
uniquely $2$-divisible inverse semigroups.
\end{abstract}
\maketitle
\tableofcontents

\section{Introduction and main results}
An important result in finite group theory is the following due to J. Thompson \cite{Thompson}.

\begin{theorem}
\thmlabel{Thompson}
Let $G$ be a finite group with a fixed-point-free automorphism of prime order. Then $G$ is nilpotent.
\end{theorem}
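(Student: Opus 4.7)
The plan is to proceed by contradiction on a minimal counterexample: suppose $G$ is a finite group of minimal order admitting a fixed-point-free automorphism $\sigma$ of prime order $p$ but failing to be nilpotent, so that every proper $\sigma$-invariant subgroup of $G$ is already nilpotent. The target contradiction would come by forcing every Sylow subgroup of $G$ to be normal.

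I would first record the standard consequences of the hypothesis. The map $g \mapsto g\inv \sigma(g)$ is injective, hence a bijection of $G$, so every element of $G$ has the form $g\inv \sigma(g)$, and $\sigma$ inherits a fixed-point-free action on each $\sigma$-invariant subgroup and each $\sigma$-invariant quotient of $G$. In particular, any proper $\sigma$-invariant subgroup, and any $\sigma$-invariant Sylow subgroup of $G$, is nilpotent by the minimality assumption.

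The argument then splits naturally into two substeps. The first is to show that $G$ is solvable: one views $G \rtimes \langle \sigma \rangle$ as a Frobenius group with kernel $G$ and complement $\langle \sigma \rangle$ of prime order $p$, and uses character-theoretic and modular-representation-theoretic techniques to control the simple composition factors. The second substep is to upgrade solvability to nilpotency; for odd primes $p$ this can be reduced to the earlier theorem of Higman, namely that a soluble group with a fixed-point-free automorphism of odd prime order is nilpotent (indeed of bounded class depending only on $p$), while the case $p = 2$ admits a separate and essentially elementary argument based on the observation that an involutive fixed-point-free automorphism must send each element to its inverse, forcing $G$ to be abelian.

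The main obstacle, and the reason Thompson's theorem is deep rather than routine, is the solvability step. Here one must convert the Frobenius-style hypothesis on $G \rtimes \langle \sigma \rangle$ into a genuine solvability conclusion for $G$ itself, and this requires Thompson's technical machinery: the Thompson subgroup $J(S)$ of a Sylow subgroup $S$, together with the replacement and transitivity theorems that produce $\sigma$-invariant characteristic subgroups rigid enough to pin down the simple composition factors of a minimal counterexample. I expect this step to be by far the hardest; once solvability is in hand, the nilpotency conclusion follows from the pre-existing results of Higman and Neumann already alluded to in the introduction.
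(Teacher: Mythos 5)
The paper does not prove this statement at all: Theorem \thmref{Thompson} is quoted as a known result with a citation to Thompson's 1959 paper and is used as a black box in the proof of Theorem \thmref{newmain1}. So there is no in-paper argument to compare yours against; the only question is whether your proposal stands on its own as a proof, and it does not. Your preliminary reductions are fine (injectivity, hence bijectivity, of $g\mapsto g^{-1}\sigma(g)$; the inherited fixed-point-free action on $\sigma$-invariant subgroups and quotients; the Frobenius structure of $G\rtimes\langle\sigma\rangle$), and your endgame is correctly sourced (Higman for the solvable case, Neumann for $p=2$). But the entire content of the theorem is concentrated in the step you yourself label the main obstacle --- forcing a normal Sylow subgroup in a minimal counterexample, equivalently excluding nonabelian simple composition factors --- and there you write only that it ``requires Thompson's technical machinery'' ($J(S)$, replacement and transitivity theorems, normal $p$-complement criteria) without carrying any of it out. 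Citing the existence of the machinery that proves the theorem is not a proof of the theorem; as written, the argument is a roadmap with its central span missing.

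Two smaller points. Higman's theorem does not require $p$ odd --- a solvable group with a fixed-point-free automorphism of any prime order is nilpotent --- so the odd/even split is needed only if you want the elementary inversion argument for $p=2$. And Thompson's actual route does not cleanly factor as ``solvability first, then nilpotency'': one shows directly that a minimal counterexample has a normal Sylow subgroup for a suitable prime via a normal $p$-complement criterion and concludes by induction on the quotient. Either way: if the intent is to use Theorem \thmref{Thompson} as the paper does, a citation is the right move; if the intent is to prove it, the solvability/normal-complement step must actually be supplied.
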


The main purpose of this note is to extend this result to finite inverse semigroups.
Standard references for inverse semigroups are (\cite{Howie}, Chap. 5), \cite{Lawson} \cite{Petrich}.
We denote, as usual, the set of idempotents of a semigroup $S$ by $E(S)$,
the automorphism group by $\Aut(S)$,
and the fixed point set of $\alpha \in \Aut(S)$ by $\Fix(\alpha) := \{x\in S\mid x\alpha=x\}$.
Our first main result is the following.

\begin{theorem}
\thmlabel{newmain1}
Let $S$ be  a finite inverse semigroup and let $\alpha\in \Aut(S)$ have prime order and
satisfy $\Fix(\alpha) = E(S)$. Then $S$ is a nilpotent Clifford semigroup.
\end{theorem}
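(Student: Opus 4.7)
The plan is to combine the structure theory of finite inverse semigroups with Thompson's Theorem applied to each maximal subgroup of $S$. There are two things to establish: first, that $S$ is a Clifford semigroup (equivalently, each $\mathcal{D}$-class contains a unique idempotent, or $ss\inv = s\inv s$ for all $s \in S$); second, that every maximal subgroup of $S$ is nilpotent. Throughout I shall use that every automorphism of an inverse semigroup commutes with inversion. Combined with the hypothesis $\Fix(\alpha) = E(S)$, this gives
\[
\alpha(s)\alpha(s)\inv = \alpha(ss\inv) = ss\inv, \qquad \alpha(s)\inv\alpha(s) = \alpha(s\inv s) = s\inv s,
\]
for every $s \in S$, so $\alpha$ setwise preserves each $\greenR$-class, each $\greenL$-class, and hence each $\mathcal{H}$-class of $S$.

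The core step is showing that $S$ is Clifford. Fix a $\mathcal{D}$-class $D$, and let $h$ denote the common cardinality of its $\mathcal{H}$-classes. Consider a group $\mathcal{H}$-class $H_e \subseteq D$. The restriction $\alpha|_{H_e}$ is a group automorphism whose only fixed point is $e$, since $\Fix(\alpha) \cap H_e = E(S) \cap H_e = \{e\}$. Its order divides $p$; if it is trivial then $H_e = \{e\}$ and $h = 1$, while if it has order exactly $p$ then orbit counting yields $h \equiv 1 \pmod{p}$. Now consider a non-group $\mathcal{H}$-class $H \subseteq D$, if any such exists: then $\alpha|_H$ has no fixed points at all (since $H$ contains no idempotent), so $\alpha|_H$ has order $p$ with every orbit of size exactly $p$, giving $p \mid h$. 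If $D$ were to contain more than one idempotent, then non-group $\mathcal{H}$-classes would exist in $D$, and we would obtain simultaneously $p \mid h$ and either $h = 1$ or $h \equiv 1 \pmod p$, both impossible for $p$ prime. Hence every $\mathcal{D}$-class of $S$ contains a unique idempotent, so $S$ is Clifford.

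To finish, for each idempotent $e \in E(S)$, the restriction $\alpha|_{H_e}$ is either trivial (and $H_e$ is trivial, hence nilpotent) or a fixed-point-free group automorphism of prime order $p$, so Thompson's Theorem (\thmref{Thompson}) yields that $H_e$ is nilpotent. Since $S$ is a Clifford semigroup whose maximal subgroups are all nilpotent, $S$ is a nilpotent Clifford semigroup.

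The main obstacle I anticipate is precisely the proof that $S$ is Clifford: one must use in an essential way both the prime-order assumption and the hypothesis that $\alpha$ fixes nothing outside $E(S)$, together with the basic structure fact that all $\mathcal{H}$-classes inside a single $\mathcal{D}$-class have the same cardinality, which is what makes the congruences $p \mid h$ and $h \equiv 1 \pmod{p}$ incompatible. After that, the conclusion is a bookkeeping reduction: an application of Thompson's Theorem to each maximal subgroup of the now-Clifford semigroup $S$.
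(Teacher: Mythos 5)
Your proposal is correct, but it reaches the key intermediate conclusion --- that $S$ is a Clifford semigroup --- by a genuinely different route from the paper. The paper's engine is Lemma \lemref{injective}: when $\Fix(\alpha)=E(S)$ the map $\psi\colon x\mapsto x\inv\cdot x\alpha$ is injective, hence surjective by finiteness, and a short direct computation shows that every element in the image of $\psi$ satisfies $xx\inv=x\inv x$; this gives completely regular $+$ inverse $=$ Clifford without ever looking inside a $\mathcal{D}$-class. You instead run a counting argument: $\alpha$ stabilizes every $\mathcal{H}$-class (since it fixes $ss\inv$ and $s\inv s$), all $\mathcal{H}$-classes of a $\mathcal{D}$-class share a cardinality $h$, a group $\mathcal{H}$-class forces $h=1$ or $h\equiv 1\pmod{p}$, while a non-group $\mathcal{H}$-class --- which exists as soon as a $\mathcal{D}$-class contains two idempotents, namely $R_e\cap L_f$ --- forces $p\mid h$, a contradiction. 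Your argument uses finiteness and the primality of the order of $\alpha$ essentially, whereas Lemma \lemref{injective} needs neither and is reused in the paper to prove Theorem \thmref{newmain2}; on the other hand, your route is self-contained modulo standard Green's-relations facts and makes visible exactly where primality enters. The endgame is the same in both proofs: $\alpha$ restricts to each maximal subgroup as either the identity (on a trivial group) or a fixed-point-free automorphism of prime order, Thompson's Theorem \thmref{Thompson} gives nilpotence of each, and the Kowol--Mitsch characterization (\cite{KowolMitsch}, Theorem 4.1: a finite strong semilattice of groups is nilpotent if and only if each of the groups is) completes the argument --- you should cite that last equivalence explicitly rather than treating it as immediate, since ``nilpotent Clifford semigroup'' is a defined notion in \cite{KowolMitsch} and not merely shorthand for ``Clifford with nilpotent maximal subgroups.''
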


Here, nilpotence of a finite Clifford semigroup is in the sense defined by
Kowol and Mitsch \cite{KowolMitsch}.

An earlier result than Thompson's is the following of B. H. Neumann \cite{Neumann1}.

\begin{theorem}
\thmlabel{neumann}
Let $G$ be a uniquely $2$-divisible group with a fixed-point-free automorphism $\alpha$ of order $2$.
Then $x\alpha = x\inv$ for all $x\in G$ and hence $G$ is abelian.
\end{theorem}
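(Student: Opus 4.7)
The plan is to introduce the map $\phi\colon G\to G$ given by $\phi(x) = x^{-1}(x\alpha)$ and establish three facts: (i) $\phi$ is injective, (ii) $\im(\phi) \subseteq S := \{g\in G : g\alpha = g^{-1}\}$, and (iii) using unique $2$-divisibility, every $h\in S$ has a $\phi$-preimage that itself lies in $S$. Granting these, for arbitrary $x\in G$ we have $\phi(x)\in S$ by (ii); by (iii) there is $g\in S$ with $\phi(g)=\phi(x)$; by (i) this forces $x=g\in S$. Hence $G=S$, which is the equation $x\alpha = x^{-1}$, and then $(xy)\alpha = (x\alpha)(y\alpha) = x^{-1}y^{-1}$ must equal $(xy)^{-1} = y^{-1}x^{-1}$, giving $xy=yx$.

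Step (i) is a standard fixed-point-free calculation: if $\phi(x)=\phi(y)$ then $yx^{-1} = (y\alpha)(x\alpha)^{-1} = (yx^{-1})\alpha$, so $yx^{-1}\in\Fix(\alpha)=\{e\}$. Step (ii) is a one-line check: $\phi(x)\alpha = (x\alpha)^{-1}x = \phi(x)^{-1}$. Neither step uses $2$-divisibility; in the finite case, step (i) alone forces $\phi$ to be surjective and hence $G=\im(\phi)\subseteq S$, so the whole finite argument collapses to these two lines.

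The main obstacle is producing preimages once finiteness is dropped, and this is where unique $2$-divisibility is essential. Given $h\in S$, let $g$ be the unique square root of $h^{-1}$. Then
\[
(g\alpha)^{2} = (g^{2})\alpha = (h^{-1})\alpha = (h\alpha)^{-1} = h = g^{-2} = (g^{-1})^{2},
\]
so uniqueness of square roots forces $g\alpha = g^{-1}$, i.e.\ $g\in S$. Moreover $\phi(g) = g^{-1}(g\alpha) = g^{-2} = h$, completing step (iii). I expect this square-root manoeuvre to be the only genuine novelty beyond the finite case; everything else is either bookkeeping or the standard Neumann-style calculation, and the abelianness of $G$ is an immediate consequence of $x\alpha = x^{-1}$ as noted above.
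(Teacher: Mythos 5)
Your argument is correct and is essentially the same as the paper's: the paper does not reprove Neumann's theorem directly but derives its generalization (Theorem \thmref{newmain2}) by exactly your route --- injectivity of $x\mapsto x\inv\cdot x\alpha$, the identity $(x\psi)\alpha=(x\psi)\inv$, and the square-root manoeuvre showing $(x\psi)^{-1/2}$ is a $\psi$-preimage of $x\psi$, forcing $(x\psi)^{-1/2}=x$. Your packaging via the set $S=\{g : g\alpha=g\inv\}$ is only a cosmetic reorganization of the same computation, and the deduction of commutativity from $x\alpha=x\inv$ matches the paper's closing remark.
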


Here uniquely $2$-divisible means that the squaring map $x\mapsto x^2$ is a bijection.
Neumann used this result to prove that a finite group with a fixed-point-free automorphism of
order $2$ must be abelian, for such a group must have odd order and then Theorem \thmref{neumann} applies.
Neumann later outlined a different proof in the finite case in \cite{Neumann2} by observing
that an automorphism $\alpha$ being fixed-point-free is equivalent to the injectivity of the function
$x\mapsto x\inv\cdot x\alpha$. By finiteness, the same function must also be surjective. This together
with $\alpha^2 = 1$ easily implies the desired result. In the same paper, he showed that if one instead
assumes $\alpha^3 = 1$, then $G$ is nilpotent of class $2$.

Theorem \thmref{neumann} is of interest on its own because the hypothesis is independent of
cardinality. Our second main result is to generalize it to inverse semigroups.

\begin{theorem}
\thmlabel{newmain2}
Let $S$ be a uniquely $2$-divisible inverse semigroup and let $\alpha\in \Aut(S)$ satisfy
$\alpha^2 = 1$ and $\Fix(\alpha) = E(S)$. Then $x\alpha = x\inv$ for all $x\in S$ and
hence $S$ is commutative.
\end{theorem}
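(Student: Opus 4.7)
The plan is to prove the pointwise identity $x\alpha=x\inv$; commutativity of $S$ then follows at once, since $(xy)\alpha = x\inv y\inv$ must equal $(xy)\inv=y\inv x\inv$, and substituting $x\inv,y\inv$ for $x,y$ yields $xy=yx$.

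Fix $x\in S$ and set $\omega:=x\inv(x\alpha)$. A direct computation with $\alpha^2=1$ gives $\omega\alpha=\omega\inv$. I would then observe that any $y\in S$ satisfying $y\alpha=y\inv$ lies in a group $\mathcal{H}$-class: the idempotent $yy\inv$ is $\alpha$-fixed by hypothesis, yet applying $\alpha$ via $y\alpha=y\inv$ also gives $(yy\inv)\alpha=y\inv y$, so $yy\inv=y\inv y$. Applied to $\omega$, together with a short calculation using $(xx\inv)\alpha=xx\inv$, this identifies $\omega\omega\inv=\omega\inv\omega$ as $f:=x\inv x$, placing $\omega$ in the maximal subgroup $H_f$.

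The main idea is then to exploit unique $2$-divisibility. Let $\sigma\in S$ be the unique element with $\sigma^2=\omega$. Applying $\alpha$ yields $(\sigma\alpha)^2=(\sigma\inv)^2$, and a second invocation of unique $2$-divisibility forces $\sigma\alpha=\sigma\inv$. The previous observation, applied to $\sigma$, combined with $\sigma^2=\omega\in H_f$ and disjointness of $\mathcal{H}$-classes, puts $\sigma\in H_f$ too. The decisive calculation is then
\[
(x\sigma)\alpha \;=\; x\alpha\cdot\sigma\inv \;=\; x\omega\sigma\inv \;=\; x\sigma^{2}\sigma\inv \;=\; x\sigma,
\]
using the identity $x\alpha=(xx\inv)(x\alpha)=x\omega$ and, in the last step, that $\sigma\sigma\inv=f$ acts as the identity of the group $H_f$. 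Hence $x\sigma\in\Fix(\alpha)=E(S)$.

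Since $x\sigma$ is idempotent, both $(x\sigma)(x\sigma)\inv=xfx\inv=xx\inv$ and $(x\sigma)\inv(x\sigma)=\sigma\inv f\sigma=x\inv x$ must equal $x\sigma$, forcing $xx\inv=x\inv x$. Thus $x$ itself lies in a subgroup $H_e$, and inside this group the equation $x\sigma=e$ identifies $\sigma$ as the group-theoretic inverse of $x$; substituting $\sigma=x\inv$ into $\sigma^2=\omega$ yields $x\alpha=x\inv$. The only genuinely nonroutine move is the square-root trick: passing to $\sigma$ with $\sigma^2=\omega$ rather than using $\omega$ directly is precisely what makes $x\sigma$ an $\alpha$-fixed point, and that fixed point is what collapses the $\mathcal{H}$-structure around $x$ and forces the inversion. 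Notably, the argument proves Theorem~\thmref{newmain2} without appealing to Theorem~\thmref{neumann}, which emerges as the special case $E(S)=\{1\}$.
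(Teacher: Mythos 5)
Your proof is correct, but it takes a genuinely different route from the paper's. Both arguments pivot on the same square-root trick applied to $\omega = x\inv\cdot x\alpha$ together with the relation $\omega\alpha=\omega\inv$; the divergence is in how the hypothesis $\Fix(\alpha)=E(S)$ is exploited. The paper feeds the element $\omega^{-1/2}$ back into the map $\psi\colon x\mapsto x\inv\cdot x\alpha$, computes $[\omega^{-1/2}]\psi=\omega$, and invokes part (1) of Lemma \lemref{injective} (injectivity of $\psi$, proved via the natural partial order) to conclude $\omega^{-1/2}=x$ in one line. You never use injectivity of $\psi$: instead you use the hypothesis in the opposite direction, manufacturing the fixed point $x\sigma$ (with $\sigma=\omega^{1/2}$), concluding that it is idempotent, and then reading off $xx\inv=x\inv x$ and finishing inside the maximal subgroup $H_e$. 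The paper's version is shorter and reuses the lemma it already needs for Theorem \thmref{newmain1}; yours is longer and leans on Green's relations, but it is self-contained (no partial-order argument), it makes the structure visible --- every element is forced into a group $\mathcal{H}$-class, so $S$ is seen to be a Clifford semigroup en route --- and it exhibits Theorem \thmref{neumann} transparently as the case $E(S)=\{1\}$. One step worth writing out explicitly: the final cancellation of $x\inv$ in $(x\inv)^2=x\inv\cdot x\alpha$ requires $x\alpha\in H_e$, which follows from your own identity $x\alpha=x\omega$ together with $x,\omega\in H_e$.
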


\section{Proofs of the main results}

In our proofs, we will freely use standard identities in inverse semigroups, such as $(x\inv)\inv = x$ and
the antiautomorphic inverse property $(xy)\inv = y\inv x\inv$ (\cite{Howie}, Proposition 5.12).
In the lemma below, we also use the natural partial order (\cite{Howie}, {\S}5.2).

The critical tool in the  proofs of Theorems \thmref{newmain1} and \thmref{newmain2} is the
following lemma analogous to a key result in \cite{Neumann2}.

\begin{lemma}
\lemlabel{injective}
Let $S$ be an inverse semigroup, let $\alpha\in \Aut(S)$, and
define $\psi : S\to S$ by $x\psi = x\inv\cdot x\alpha$ for all $x\in S$. Then:
\begin{enumerate}
\item If $\Fix(\alpha) = E(S)$, then $\psi$ is injective.
\item If $\psi$ is injective, then $\Fix(\alpha)\subseteq E(S)$.
\end{enumerate}
\end{lemma}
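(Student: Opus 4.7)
The plan is to prove the two parts separately, with (2) being the shorter argument.

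For part (2), I would suppose $x\alpha = x$. Since $\alpha$ is an automorphism, $x\inv\alpha = (x\alpha)\inv = x\inv$, hence $(x\inv x)\alpha = x\inv x$. Then $x\psi = x\inv\cdot x\alpha = x\inv x$ and $(x\inv x)\psi = (x\inv x)(x\inv x) = x\inv x$, so $x\psi = (x\inv x)\psi$. Injectivity of $\psi$ forces $x = x\inv x \in E(S)$.

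For part (1), my key preliminary observation is that $\alpha$ fixes every idempotent and preserves inverses; applied to the idempotents $xx\inv$ and $x\inv x$, this yields
\[ xx\inv = x\alpha\cdot (x\alpha)\inv \qquad \text{and} \qquad x\inv x = (x\alpha)\inv\cdot x\alpha \]
for every $x \in S$. I would use these two identities repeatedly. Now suppose $x\psi = y\psi$. First, taking the right idempotent of $x\psi = x\inv\cdot x\alpha$ and simplifying via the preliminary identities gives $(x\psi)\inv (x\psi) = x\inv x$; the symmetric computation for $y$ then yields $x\inv x = y\inv y$. Next, I would left-multiply $x\inv\cdot x\alpha = y\inv\cdot y\alpha$ by $y$, which collapses the right-hand side to $y\alpha$ because $yy\inv$ is $\alpha$-fixed. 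Right-multiplying by $(x\alpha)\inv$ then collapses the left-hand side to $yx\inv$, because $xx\inv$ is $\alpha$-fixed. The upshot is $yx\inv = (yx\inv)\alpha$, placing $yx\inv$ in $\Fix(\alpha) = E(S)$. By symmetry $xy\inv$ is also idempotent, and since idempotents in an inverse semigroup are self-inverse, $e := yx\inv = xy\inv$. Using $x\inv x = y\inv y$, direct computation gives $ex = y$ and $ey = x$, whence $x = ey = e\cdot ex = e^2 x = ex = y$.

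The main obstacle is bookkeeping: one has to insert $\alpha$-fixed idempotents at the right places to produce cancellations. The whole argument is driven by the two preliminary identities displayed above, which effectively let one pass between $x$ and $x\alpha$ through their common left and right idempotents.
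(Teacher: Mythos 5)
Your proof is correct and follows essentially the same route as the paper: part (2) is the identical computation, and in part (1) both arguments hinge on showing that $yx\inv$ is fixed by $\alpha$, hence idempotent, and then concluding $x=y$ by symmetry (your explicit computation with $e=yx\inv=xy\inv$ is just the antisymmetry of the natural partial order, which the paper invokes directly). The only cosmetic difference is that you reach $(yx\inv)\alpha = yx\inv$ via the two preliminary identities rather than by applying $\alpha\inv$ as the paper does.
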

\begin{proof}
For (1), assume $\Fix(\alpha) = E(S)$ and suppose $a\psi = b\psi$ for some $a,b\in S$.
Then $ba\inv\cdot a\alpha = bb\inv\cdot b\alpha$. Applying $\alpha\inv$ to both sides, we get
\begin{equation}
\eqnlabel{above}
(ba\inv)\alpha\inv a = (bb\inv)\alpha\inv b = bb\inv b = b\,.
\end{equation}
Thus $(ba\inv)\alpha\inv aa\inv = ba\inv$. Applying $\alpha$ to both sides of this,
we get (starting with the right side) $(ba\inv)\alpha = ba\inv (aa\inv)\alpha = ba\inv aa\inv = ba\inv$.
Thus $ba\inv \in \Fix(\alpha)$. By hypothesis, $ba\inv \in E(S)$, and so using \eqnref{above}, we get
$ba\inv a = b$, that is, $b\leq a$ (see \cite[Proposition 5.2.1]{Howie}). By the obvious symmetry, we also have $a\leq b$, and thus $a=b$,
which is what we desired to prove.

For (2), suppose $\psi$ is injective and $a\alpha = a$. Then
\[
a\psi = a\inv \cdot a\alpha = a\inv a = a\inv aa\inv a = (a\inv a)\inv \cdot (a\inv a)\alpha = (a\inv a)\psi\,.
\]
By injectivity of $\psi$, $a = a\inv a$, and thus $aa = aa\inv a = a$, as claimed.
\end{proof}

We now prove Theorem \thmref{newmain1}. Recall that if $S$ is an inverse semigroup and $\alpha$ is an automorphism
of $S$, then we have $(a^{-1})\alpha=(a\alpha)^{-1}$. By Lemma \lemref{injective}, the map $\psi$ is injective.
Since $S$ is finite, $\psi$ is also surjective. For $x\in S$, let $y\in S$ satisfy $x = y\psi$. Then
\[
xx\inv = y\psi\cdot (y\psi)\inv =
 y\inv\cdot y\alpha \cdot (y\alpha)\inv\cdot y = y\inv\cdot (yy\inv)\alpha\cdot y
= y\inv \cdot yy\inv\cdot y = y\inv y\,,
\]
and
\[
x\inv x = (y\psi)\inv \cdot y\psi =
 (y\alpha)\inv \cdot y\cdot y\inv\cdot y\alpha = (y\inv\cdot yy\inv\cdot y)\alpha = (y\inv\cdot y)\alpha
= y\inv y\,.
\]
We conclude that $x\inv x = xx\inv$. Therefore $S$ is a completely regular and inverse semigroup, hence it is a Clifford semigroup (see \cite[Theorem 4.2.1]{Howie}).
Now $S = \bigcup G_{\beta}$ is a (strong) semilattice of groups $G_{\beta}$ (see \cite[Theorem 4.2.1]{Howie}). These groups,
which are the $\mathcal{H}$-classes of $S$ (see \cite[Theorem II.1.4]{PandR}), are permuted by $\alpha$ since automorphisms
preserve Green's relations.
But by assumption, $\alpha$ fixes the identity element of each group, and hence $\alpha$ restricts
to an automorphism of each $G_{\beta}$. Now we apply Theorem \thmref{Thompson} to conclude that each group $G_{\beta}$ is
nilpotent. Finally, we appeal to a key feature of the Kowol-Mitsch notion of nilpotence
for finite Clifford semigroups: \emph{if} $S = \bigcup G_{\beta}$ \emph{is a strong semilattice of groups} $G_{\beta}$, \emph{then}
$S$ \emph{is nilpotent if and only if each} $G_{\beta}$ \emph{is nilpotent}
(\cite{KowolMitsch}, Theorem 4.1, p. 442). This completes the proof of Theorem \thmref{newmain1}.

\bigskip

For Theorem \thmref{newmain2}, the squaring map $x\mapsto x^2$ is assumed to be bijective,
and so we denote the unique
square root of an element $x\in S$ by $x^{1/2}$. We have $(x\alpha)^{1/2} = (x^{1/2})\alpha$,
as can be seen immediately from squaring both sides. Similarly, $(x^{1/2})\inv = (x\inv)^{1/2}$,
and we write $x^{-1/2}$ for this common expression.

For the function $x\psi = x\inv\cdot x\alpha$, we note that
\begin{equation}
\eqnlabel{psialpha}
(x\psi)\alpha = (x\alpha)\inv\cdot x\alpha^2=(x\alpha)\inv\cdot x = (x\psi)\inv
\end{equation}
since $\alpha^2 = 1$. Thus we compute
\[
[(x\psi)^{-1/2}]\psi = (x\psi)^{1/2}\cdot ((x\psi)^{-1/2})\alpha =
(x\psi)^{1/2} (x\psi)^{1/2} = x\psi\,,
\]
using \eqnref{psialpha} in the last step. By Lemma \lemref{injective}, $\psi$ is injective,
and so we conclude
\begin{equation}
\eqnlabel{almost}
(x\psi)^{-1/2} = x
\end{equation}
for all $x\in S$. Therefore
\[
x\alpha = [(x\psi)^{-1/2}]\alpha = (x\psi)^{1/2} = [(x\psi)^{-1/2}]\inv = x\inv\,,
\]
using \eqnref{almost} in the first and last equalities, and \eqnref{psialpha} in the second.
Finally, the commutativity of $S$ follows because the inversion mapping $x\mapsto x\inv$ is both
an automorphism (thus $(xy)^{-1}=x^{-1}y^{-1}$) and an antiautomorphism (thus $(xy)^{-1}=y^{-1}x^{-1}$);
now the identities $x^{-1}y^{-1}=y^{-1}x^{-1}$ and $(x^{-1})^{-1}=x$ imply that $xy=yx$.
This completes the proof of Theorem \thmref{newmain2}.

\section{Remarks and Problems}

We do not know if part (2) of Lemma \lemref{injective} extends to a full converse of part (1), that is,
if the injectivity of $\psi$ implies that every idempotent is a fixed point of $\alpha$.

\begin{problem}
Let $S$ be an inverse semigroup and let $\alpha\in \Aut(S)$ satisfy
the property that $x\mapsto x\inv\cdot x\alpha$ is injective. Is it the case that
$\Fix(\alpha) = E(S)$?
\end{problem}

We do have some computational evidence that the answer is affirmative in the following case.

\begin{conjecture}
Let $S$ be an inverse semigroup and let $\alpha\in \Aut(S)$ have finite order and satisfy
the property that $x\mapsto x\inv\cdot x\alpha$ is injective. Then $\Fix(\alpha) = E(S)$.
\end{conjecture}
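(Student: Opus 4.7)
The plan is to exploit the injectivity of $\psi$ only on $E(S)$, where it takes a particularly tractable form. For $e\in E(S)$ one has $e\inv=e$, so $e\psi=e\cdot e\alpha\in E(S)$, since the idempotents of an inverse semigroup form a sub-semilattice. Writing $\phi$ for the restriction of $\psi$ to $E(S)$, the hypothesis supplies the injectivity of $\phi$ as a self-map of $E(S)$. By Lemma \lemref{injective}(2) we already have $\Fix(\alpha)\subseteq E(S)$, so the task reduces to showing that every idempotent is $\alpha$-fixed.

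Fix $e\in E(S)$ and let $m$ be the length of its $\alpha$-orbit, which is finite since $\alpha$ has finite order. A short induction on $k$, using commutativity and idempotence in $E(S)$ together with the fact that $\alpha$ is a homomorphism, should give
\[
\phi^k(e) \;=\; e\cdot e\alpha\cdots e\alpha^k
\]
for every $k\ge 0$. Setting $y:=\phi^{m-1}(e)$, the element $y$ is the meet of the complete $\alpha$-orbit of $e$, so $\alpha$ permutes its factors and $y\alpha=y$. Therefore $\phi(y)=y\cdot y\alpha=y$, and consequently $\phi^m(e)=\phi(y)=y=\phi^{m-1}(e)$.

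The final step is to cascade this equality backward using injectivity of $\phi$: rewrite $\phi^m(e)=\phi^{m-1}(e)$ as $\phi(\phi^{m-1}(e))=\phi(\phi^{m-2}(e))$ and cancel to obtain $\phi^{m-1}(e)=\phi^{m-2}(e)$, then iterate this cancellation $m-1$ times to conclude $e=\phi^0(e)=\phi^{m-1}(e)=y$. Since $y\alpha=y$, we get $e\alpha=e$. Combined with Lemma \lemref{injective}(2) this proves $\Fix(\alpha)=E(S)$.

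The argument uses only the finite order of $\alpha$, with no cardinality assumption on $S$ or on $E(S)$, and so if correct it proves the conjecture in full. The point at which I would look hardest for a hidden obstruction is the cascade of injectivity applications: formally, the chain $\phi(\phi^j(e))=\phi(\phi^{j-1}(e))$ does cancel one $\phi$ at each step, but because the authors explicitly state this only as a conjecture supported by ``computational evidence,'' despite the apparent simplicity of the formulation, I would want to double-check that no implicit hypothesis on the orbit structure or on how $\alpha$ interacts with lower idempotents has been silently absorbed into the step where the orbit-meet $y$ is shown to equal $e$.
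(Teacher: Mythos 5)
Be aware that the paper contains \emph{no} proof of this statement: it is posed as an open conjecture, supported only by computational evidence, so there is nothing to compare your argument against except the statement itself. That said, I have checked your proof step by step and believe it is correct and complete. The key observation --- that $\psi$ restricts to an injective self-map $\phi$ of the semilattice $E(S)$ acting by $e\mapsto e\wedge e\alpha$ (the meet being the product of commuting idempotents, with $e\inv=e$ and $e\alpha\in E(S)$ standard) --- is sound. The induction giving $e\phi^k = e\wedge e\alpha\wedge\cdots\wedge e\alpha^k$ works because $\alpha$ preserves meets and repeated commuting idempotent factors collapse; with $m$ the length of the $\alpha$-orbit of $e$ (finite precisely because $\alpha$ has finite order), the element $y=e\phi^{m-1}$ is the meet of the entire orbit, so $y\alpha=y$ and $y\phi=y\cdot y=y$, whence $e\phi^m=e\phi^{m-1}$. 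The cascade you flagged as the likely weak point is in fact just the standard observation that an injective self-map $f$ with $f^a(x)=f^b(x)$ and $a>b$ forces $f^{a-b}(x)=x$; here it yields $e=e\phi=\cdots=e\phi^{m-1}=y$, hence $e\alpha=y\alpha=y=e$. Combined with Lemma \lemref{injective}(2) this gives $\Fix(\alpha)=E(S)$ with no cardinality assumption on $S$. Your argument also isolates exactly why finite order is indispensable: on the semilattice $(\mathbb{Z},\min)$ the shift $n\mapsto n+1$ makes $\psi$ the identity map (hence injective) while fixing nothing, so the unrestricted Problem preceding this conjecture in the paper actually has a negative answer. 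Write this up as a theorem rather than a verification; if it withstands independent checking, it settles the conjecture affirmatively.
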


A property which was useful in our proofs is that an automorphism $\alpha$ of an inverse semigroup
preserves the inversion map, that is, $(x\inv)\alpha = (x\alpha)\inv$. This same property also holds for
completely regular semigroups, and so it is natural to ask if analogs of our results hold in that
setting as well. For instance, we offer the following:

\begin{conjecture}
Let $S$ be a uniquely $2$-divisible completely regular semigroup and let $\alpha\in \Aut(S)$ satisfy
$\alpha^2 = 1$ and $\Fix(\alpha) = E(S)$. Then $x\alpha = x\inv$ for all $x\in S$.
\end{conjecture}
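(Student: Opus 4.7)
The plan is to reduce the conjecture to Theorem \thmref{neumann} by applying that result to each maximal subgroup of $S$. Since $S$ is completely regular, each $\mathcal{H}$-class of $S$ is a group, giving the disjoint decomposition $S = \bigsqcup_{e\in E(S)} H_e$, where $H_e$ is the maximal subgroup at the idempotent $e$; the group inverse in $H_e$ agrees with the completely regular inverse $x\mapsto x\inv$ restricted to $H_e$. The strategy is to show that $\alpha$ restricts to a fixed-point-free involution of each $H_e$ to which Theorem \thmref{neumann} applies, and then to assemble the pointwise conclusions.

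For the reduction, I would use that automorphisms preserve Green's relations and hence permute $\mathcal{H}$-classes; since $e\alpha = e$ for every $e\in E(S)$, we have $\alpha(H_e) = H_{e\alpha} = H_e$, so $\alpha$ restricts to a group automorphism $\alpha_e := \alpha|_{H_e}$ with $\alpha_e^2 = 1$ and $\Fix(\alpha_e) = \Fix(\alpha)\cap H_e = E(S)\cap H_e = \{e\}$; thus $\alpha_e$ is fixed-point-free on $H_e$. I also need unique $2$-divisibility to descend to $H_e$: squaring is injective on $H_e$ because it is injective on $S$, and it is surjective because if $z\in H_e$ and $y\in S$ is the unique square root of $z$ in $S$, then $y$ lies in some subgroup $H_f$, whence $y^2\in H_f$ forces $f = e$ and $y\in H_e$. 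Theorem \thmref{neumann} applied to $(H_e,\alpha_e)$ then gives $x\alpha = x\inv$ for every $x\in H_e$, and letting $e$ range over $E(S)$ yields the claim on all of $S$.

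I do not expect a serious obstacle: once the decomposition $S = \bigsqcup_e H_e$ and the action of $\alpha$ on it are in place, the argument reduces cleanly to Theorem \thmref{neumann} subgroup by subgroup, in direct analogy with how Theorem \thmref{newmain1} reduced to Thompson's theorem on each $\mathcal{H}$-class of the Clifford semigroup obtained there. Theorem \thmref{neumann} also gives that each $H_e$ is abelian, but this cannot be promoted to commutativity of $S$ by the present strategy: in a completely regular semigroup the inversion map need not be an antiautomorphism, so the final step of the proof of Theorem \thmref{newmain2} has no analog here, consistently with the conjecture being stated without a commutativity conclusion.
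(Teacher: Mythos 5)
The first thing to say is that the paper offers no proof of this statement: it is posed explicitly as an open conjecture, so there is no argument of record to compare yours against. That said, your reduction looks correct to me, and if so it settles the conjecture affirmatively. Every step rests on standard facts: in a completely regular semigroup the $\mathcal{H}$-classes are exactly the maximal subgroups and they partition $S$; an automorphism permutes $\mathcal{H}$-classes, and since it fixes every idempotent it stabilizes each $H_e$; $E(S)\cap H_e=\{e\}$ gives $\Fix(\alpha|_{H_e})=\{e\}$; unique $2$-divisibility descends to $H_e$ exactly as you argue, because $H_e$ is closed under multiplication and distinct $\mathcal{H}$-classes are disjoint; and the unary operation $x\mapsto x\inv$ on a completely regular semigroup is precisely inversion in the maximal subgroup containing $x$, so the conclusions on the $H_e$ assemble to the conclusion on $S$. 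The one point you should make explicit is the degenerate case $|H_e|=1$: there $\alpha|_{H_e}$ has order $1$, so Theorem \thmref{neumann} does not literally apply, but the conclusion $e\alpha=e=e\inv$ is trivial; when $|H_e|>1$, fixed-point-freeness forces $\alpha|_{H_e}\neq 1$ and hence order exactly $2$. It is also worth seeing why the authors' own method for Theorem \thmref{newmain2} does not transfer: that proof runs entirely through the map $\psi$ and Lemma \lemref{injective}, whose verification uses the natural partial order and identities such as $bb\inv b=b$ that are special to inverse semigroups, and it deliberately avoids structure theory. Your proof instead mirrors the reduction used for Theorem \thmref{newmain1} (restrict to maximal subgroups and invoke the group-theoretic theorem on each), which is available here with no finiteness hypothesis because complete regularity hands you the decomposition into groups for free. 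Your closing observation is also apt: inversion need not be an antiautomorphism of a completely regular semigroup, so commutativity of $S$ does not follow, consistent with the left zero band example the authors give.
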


Consideration of the identity mapping on a finite left zero band with at least two elements shows that
one cannot strengthen the conclusion of the conjecture to commutativity.

One might also try regular involuted semigroups, that is, semigroups with a unary operation ${}'$ such that
the identities
\[
(xy)'=y'x'\qquad x''=x \qquad x=xx'x\,.
\]
hold. However, we do not get an immediate generalization of, say, Theorem \thmref{newmain2}.
For instance, let $S$ be the band with the following multiplication table:

\begin{table}[htb]  \centering
\begin{tabular}{r|rrrr}
$\cdot$ & 1 & 2 & 3 & 4\\
\hline
    1 & 1 & 3 & 3 & 1 \\
    2 & 4 & 2 & 2 & 4 \\
    3 & 1 & 3 & 3 & 1 \\
    4 & 4 & 2 & 2 & 4
\end{tabular}
\end{table}

\noindent Let $\alpha$ be the identity mapping on $S$ and let ${}'$ be the unary operation
defined by $1' = 2$, $2' = 1$, $3' = 3$, $4' = 4$. Then $(S,\cdot,{}')$ is a regular involuted
semigroup, but we have neither $x\alpha = x'$ for all $x\in S$ nor that $S$ is commutative.

Despite this, it is certainly reasonable to guess that other classes of regular semigroups might
yield interesting results.

\begin{problem}
Extend Theorem \thmref{neumann} to other classes of regular semigroups.
\end{problem}

Cancellative semigroups form another natural class of semigroups closely related to groups.
Therefore the next problem is very natural.

\begin{problem}
Does the analogue of Theorem \thmref{neumann} hold for cancellative semigroups?
\end{problem}

Regarding nilpotence, we followed the definition of \cite{KowolMitsch} for finite Clifford semigroups.
This definition was motivated by the fact that nilpotence of (finite) groups can
be characterized in various different ways, and the authors of \cite{KowolMitsch} wished to keep these characterizations
in (finite) inverse semigroups (\cite{KowolMitsch}, Main Theorem, p. 448). This is, of course, a rather
strong requirement and suggests why this notion of nilpotence does not extend much beyond Clifford
semigroups.

\begin{problem}
Find appropriate notions of nilpotence for other classes of semigroups, containing the class of all groups,
such that the restriction of the notion to groups is equivalent to the usual one, and, in addition, a
generalization of Theorem \thmref{Thompson} holds for that class of semigroups.
\end{problem}

\bigskip

\begin{acknowledgment}
We are pleased to acknowledge the assistance of the automated theorem prover
\textsc{Prover9} and the finite model builder \textsc{Mace4}, both developed by
W. McCune \cite{McCune}.

The first author was partially supported by FCT and FEDER,
Project POCTI-ISFL-1-143 of Centro de Algebra da Universidade de Lisboa,
and by FCT and PIDDAC through the project PTDC/MAT/69514/2006.
\end{acknowledgment}


\bibliographystyle{ijmart}

\begin{thebibliography}{99}

\bibitem{Howie}
J. M. Howie,
\emph{Fundamentals of semigroup theory}.
London Mathematical Society Monographs. New Series, 12.
Oxford Science Publications.
The Clarendon Press, Oxford University Press, New York, 1995.

\bibitem{KowolMitsch}
G. Kowol and H. Mitsch,
\emph{Nilpotent inverse semigroups with central idempotents},
Trans. Amer. Math. Soc. \textbf{271} (1982), 437--449.

\bibitem{Lawson}
M. V. Lawson,
\textit{Inverse semigroups. The theory of partial symmetries},
World Sci. Publ. Co., Inc., River Edge, NJ, 1998.

\bibitem{McCune}
W. McCune,
\textsc{Prover9} and \textsc{Mace4},
version LADR--2009--11A,
\url{http://www.cs.unm.edu/~mccune/prover9/}

\bibitem{Neumann1}
B. H. Neumann,
\emph{On the commutativity of addition},
J. London Math. Soc. \textbf{15} (1940), 203--208.

\bibitem{Neumann2}
B. H. Neumann,
\emph{Groups with automorphisms that leave only the neutral element fixed},
Arch. Math. (Basel) \textbf{7} (1956), 1--5.

\bibitem{Petrich}
M. Petrich,
\textit{Inverse semigroups},
Pure and Applied Mathematics (New York).
A Wiley-Interscience Publication. John Wiley \& Sons, Inc., New York, 1984.

\bibitem{PandR}
M. Petrich and N. Reilly,
\textit{Completely regular semigroups},
Canadian Mathematical Society Series of Monographs and Advanced Texts \textbf{23},
Wiley-Interscience Publication. John Wiley \& Sons, Inc., New York, 1999.

\bibitem{Thompson}
J. Thompson,
\emph{Finite groups with fixed-point-free automorphisms of prime order},
Proc. Nat. Acad. Sci. \textbf{45} (1959), 578--581.

\end{thebibliography}

\end{document}